\def\Mat{\text{M}}
\def\GL{\text{GL}}
\def\bfB{\mathbf{B}}
\def\id{\text{id}}
\def\card{\#\,}
\newcommand{\Ker}{\operatorname{Ker}}
\newcommand{\Vect}{\operatorname{span}}
\newcommand{\im}{\operatorname{Im}}
\newcommand{\tr}{\operatorname{tr}}
\newcommand{\Sp}{\operatorname{Sp}}
\newcommand{\rk}{\operatorname{rk}}
\renewcommand{\setminus}{\smallsetminus}
\def\F{\mathbb{F}}
\def\K{\mathbb{K}}
\def\calA{\mathcal{A}}
\def\calF{\mathcal{F}}
\def\calG{\mathcal{G}}
\def\calP{\mathcal{P}}
\def\calQ{\mathcal{Q}}
\def\calV{\mathcal{V}}
\def\lcro{\mathopen{[\![}}
\def\rcro{\mathclose{]\!]}}
\theoremstyle{definition}
\newtheorem{Def}{Definition}
\newtheorem{Not}[Def]{Notation}
\theoremstyle{plain}
\newtheorem{theo}{Theorem}
\newtheorem{prop}[theo]{Proposition}
\newtheorem{cor}[theo]{Corollary}
\newtheorem{lemme}[theo]{Lemma}
\theoremstyle{plain}
\theoremstyle{remark}
\newtheorem{Rems}{Remarks}
\newtheorem{Rem}[Rems]{Remark}
\title{The affine preservers of non-singular matrices}
\author{Cl\'ement de Seguins Pazzis\footnote{Professor of Mathematics at Lyc\'ee Priv\'e Sainte-Genevi\`eve, 2, rue
de l'\'Ecole des Postes, 78029 Versailles Cedex, FRANCE.}
\footnote{e-mail address: dsp.prof@gmail.com}}
\begin{document}

\thispagestyle{plain}
\maketitle

\begin{abstract}
When $\K$ is an arbitrary field, we study the affine automorphisms of $\Mat_n(\K)$ that stabilize $\GL_n(\K)$.
Using a theorem of Dieudonn\'e on maximal affine subspaces of singular matrices,
this is easily reduced to the known case of linear preservers when $n>2$ or $\card \K>2$. We include a short new proof of
the more general Flanders' theorem for affine subspaces of $\Mat_{p,q}(\K)$ with bounded rank.
We also find that the group of affine transformations of $\Mat_2(\F_2)$ that stabilize $\GL_2(\F_2)$
does not consist solely of linear maps. Using the theory of quadratic forms over $\F_2$, we construct explicit
isomorphisms between it, the symplectic group $\Sp_4(\F_2)$ and the symmetric group $\frak{S}_6$.
\end{abstract}

\vskip 2mm
\noindent
\emph{AMS Classification:} 15A86; 15A63, 11E57.

\vskip 2mm
\noindent
\emph{Keywords:} linear preservers, general linear group, singular subspaces, affine group, rank,
linear subspaces, symplectic group, Arf invariant, quadratic forms, symmetric group.

\section{Introduction}

Here, $\K$ will denote an arbitrary field and $n$ a positive integer.
By an affine transformation of an affine space, we will always mean an affine \emph{bijective} map.
We let $\Mat_{n,p}(\K)$ denote the set of matrices with $n$ rows, $p$ columns and entries in $\K$, and
$\GL_n(\K)$ the set of non-singular matrices in the algebra $\Mat_n(\K)$ of square matrices of order $n$. \\
For $(i,j)\in \lcro 1,n\rcro \times \lcro 1,p\rcro$, we let $E_{i,j}$ denote the elementary matrix
of $\Mat_{n,p}(\K)$ with entry $1$ at the $(i,j)$ spot and zero elsewhere.

\noindent We make the group $\GL_n(\K) \times \GL_p(\K)$ act on the set of linear subspaces of $\Mat_{n,p}(\K)$
by
$$(P,Q).V:=P\,V\,Q^{-1}.$$
Two linear subspaces of the same orbit will be called \textbf{equivalent}
(this means that they represent the same set of linear transformations from a $p$-dimensional vector space
to an $n$-dimensional vector space).

\noindent For non-singular matrices $P$ and $Q$ in $\GL_n(\K)$, we define
$$u_{P,Q} : \begin{cases}
\Mat_n(\K) & \longrightarrow \Mat_n(\K) \\
M & \longmapsto P\,M\,Q
\end{cases}\quad \text{and} \quad v_{P,Q} :
\begin{cases}
\Mat_n(\K) & \longrightarrow \Mat_n(\K) \\
M & \longmapsto P\,M^t\,Q.
\end{cases}$$
Clearly, these are non-singular endomorphisms of the vector space $\Mat_n(\K)$
which map $\GL_n(\K)$ onto itself, and the subset
$$\calG_n(\K):=\bigl\{u_{P,Q} \mid (P,Q)\in \GL_n(\K)^2\bigr\} \cup \bigl\{v_{P,Q} \mid (P,Q)\in \GL_n(\K)^2\bigr\}$$
is a subgroup of $\GL(\Mat_n(\K))$, which we will call the \emph{Frobenius group.}

\vskip 2mm
\noindent One of the earliest result on linear preservers problems is the following one of Dieudonn\'e
\cite{Dieudonne} following a classical work of Frobenius \cite{Frobenius}:
\begin{theo}\label{Dieudo1}
The group $\calG_n(\K)$ consists of all the automorphisms of the vector space $\Mat_n(\K)$ which stabilize $\GL_n(\K)$.
\end{theo}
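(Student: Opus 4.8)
The statement is the classical Frobenius--Dieudonn\'e linear preserver theorem, and the plan is to exploit the maximal linear subspaces of the singular locus. Since $\phi$ is a linear automorphism stabilising $\GL_n(\K)$, both $\phi$ and $\phi^{-1}$ map the set $\calS$ of singular matrices onto itself; hence $\phi$ carries every linear subspace contained in $\calS$ to another such subspace of the same dimension and respects inclusions. In particular $\phi$ permutes the linear subspaces of $\calS$ of the top dimension $n^2-n$. By Dieudonn\'e's classification (the linear analogue of the Flanders-type result established later in the paper) these fall into two families, each parametrised by the lines, resp.\ hyperplanes, of $\K^n$: the spaces $V_x:=\{M:Mx=0\}$ (kernel containing a fixed line $\langle x\rangle$) and the spaces $W_H:=\{M:\im M\subseteq H\}$ (image contained in a fixed hyperplane $H$).

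First I would separate the two families by an intersection-dimension computation: for distinct members of one family one finds $\dim(V_x\cap V_{x'})=\dim(W_H\cap W_{H'})=n(n-2)$, whereas across families $\dim(V_x\cap W_H)=(n-1)^2=n(n-2)+1$. As $\phi$ preserves the dimension of the image of any subspace and $\phi(A\cap B)=\phi(A)\cap\phi(B)$, it preserves the relation ``belonging to the same family'' in both directions; being a bijection of the set of top-dimensional singular subspaces that respects this equivalence relation with two classes, $\phi$ either fixes both families or interchanges them. The transposition $M\mapsto M^t$, which is $v_{I_n,I_n}\in\calG_n(\K)$, interchanges them, so after replacing $\phi$ by $v_{I_n,I_n}\circ\phi$ if necessary I may assume $\phi$ fixes each family, inducing bijections $\phi(V_x)=V_{\sigma\langle x\rangle}$ and $\phi(W_H)=W_{\tau H}$.

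Next I would recover the linear structure. For $x_1,\dots,x_k$ spanning a $k$-dimensional space one has $\dim(V_{x_1}\cap\cdots\cap V_{x_k})=n(n-k)$, and this value is minimal exactly when the $x_i$ are independent; hence $\sigma$ preserves projective independence and is a collineation of $\mathbb{P}^{n-1}$. For $n\ge3$ the fundamental theorem of projective geometry shows $\sigma$ comes from a semilinear bijection $g$ of $\K^n$, and since $\phi$ is $\K$-linear the field automorphism attached to $g$ is forced to be trivial; thus $\sigma$, and likewise $\tau$, are induced by genuine elements of $\GL_n(\K)$. Composing $\phi$ with a suitable $u_{P,Q}$ then reduces to the case where $\phi$ fixes every $V_x$ and every $W_H$. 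In that case each rank-one line is pinned down, as $\langle xy^t\rangle=\bigl(\bigcap_{y^tz=0}V_z\bigr)\cap\bigl(\bigcap_{x\in H}W_H\bigr)$, so $\phi(xy^t)\in\langle xy^t\rangle$ for all $x,y$. A routine additivity argument forces the scalar in $\phi(xy^t)=\lambda_{x,y}\,xy^t$ to be a single constant $c\in\K\setminus\{0\}$, and since the rank-one matrices span $\Mat_n(\K)$ one concludes $\phi=c\,\id=u_{cI_n,I_n}$.

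The step I expect to be the main obstacle is the low-dimensional case $n\le2$, where the fundamental theorem of projective geometry is unavailable: every bijection of $\mathbb{P}^1$ is a collineation, so for $n=2$ the argument above imposes no constraint on $\sigma$. Here I would instead use that on $\Mat_2(\K)$ the determinant is a nondegenerate quadratic form whose isotropic cone is exactly $\calS$; a linear bijection preserving this cone should preserve $\det$ up to a scalar, placing $\phi$ in the corresponding group of similitudes, which one identifies with $\calG_2(\K)$ by hand. This is precisely where the field $\F_2$ demands the careful quadratic-form analysis carried out in the rest of the paper, so I would treat $n=2$, and especially $\K=\F_2$, as a genuinely separate computation rather than a corollary of the generic case; the case $n=1$, where $\phi$ is visibly a scalar $u_{c,1}$, is immediate.
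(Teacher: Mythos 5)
The first thing to say is that the paper does not prove Theorem \ref{Dieudo1} at all: it is quoted as a known result of Dieudonn\'e (following Frobenius) and used as a black box, the paper's own contributions being the affine extension and a new proof of Theorem \ref{Dieudo2}. So there is no proof in the paper to compare yours against. What you have written is an outline of Dieudonn\'e's own classical argument, which does proceed exactly as you describe: classify the maximal singular linear subspaces (this is what Theorem \ref{Dieudo2} supplies, with no exception in the linear case), separate the two families $V_x$ and $W_H$ by their intersection pattern, and recover a collineation. Your intersection-dimension computations are correct, and the endgame (pinning down each rank-one line via $\langle xy^t\rangle=\bigl(\bigcap_{y^tz=0}V_z\bigr)\cap\bigl(\bigcap_{x\in H}W_H\bigr)$ and running the additivity argument to force $\phi=c\,\id$) is sound.

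Two genuine gaps remain. First, for $n\ge 3$ the fundamental theorem of projective geometry only gives a \emph{semilinear} $g$ inducing $\sigma$, and your reduction ``compose with a suitable $u_{P,Q}$'' silently requires the attached field automorphism to be trivial, because $u_{P,Q}$ acts on the family $\{V_x\}$ through the genuinely linear map $x\mapsto Q^{-1}x$. That linearity of $\phi$ forces the automorphism to be trivial is true but needs an argument; one way is to restrict $\phi$ to the subspace $C_x=\{xv^t\mid v\in\K^n\}=\bigcap_{H\ni x}W_H$, note that this restriction is a $\K$-linear bijection onto some $C_{x'}$ which induces the relevant collineation on lines, so that collineation comes from an honest linear map. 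Second, and more seriously, the case $n=2$ is not proved but only reduced to identifying the similitude group of the quadratic space $(\Mat_2(\K),\det)$ with $\calG_2(\K)$ --- which is precisely the classical theorem Dieudonn\'e set out to generalize and is comparable in weight to the statement being proved; ``by hand'' here conceals real work. (A minor further point: if ``stabilize'' only means $\phi(\GL_n(\K))\subseteq\GL_n(\K)$, then one only gets $\phi^{-1}(\calS)\subseteq\calS$ directly, so the subspace argument should be run on $\phi^{-1}$; this is harmless but your opening sentence asserts more than the hypothesis gives.) As a strategy your proposal is the right one, but as written it is a sketch with the two-dimensional case and the semilinearity step left open.
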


\noindent To prove this, Dieudonn\'e established a major result on \emph{singular} subspaces of $\Mat_n(\K)$ (i.e. linear or affine subspaces
which contain only singular matrices):

\begin{theo}\label{Dieudo2}
Let $\calV$ be a singular affine subspace of $\Mat_n(\K)$. Then $\dim \calV \leq n(n-1)$. \\
If $\dim \calV =n(n-1)$, then either $\calV$ or $\calV^t$ is equivalent to
the linear subspace $\Bigl\{\begin{bmatrix}
M & 0
\end{bmatrix} \mid M \in \Mat_{n-1}(\K)\Bigr\}$ unless $n=2$, $\card \K=2$ and
$\calV$ is not a linear subspace of $\Mat_n(\K)$.
\end{theo}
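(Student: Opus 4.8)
The plan is to read $\dim \calV \le n(n-1)$ as the instance $p=q=n$, $r=n-1$ of a Flanders-type bound for affine subspaces of $\Mat_{p,q}(\K)$ all of whose elements have rank at most $r$, and to prove that bound by induction on $n$. Write $\calV = M_0 + W$, where $W$ is the linear direction of $\calV$ and $M_0 \in \calV$ is chosen of maximal rank $r$ among the elements of $\calV$; since every element of $\calV$ is singular, $r \le n-1$. The decisive observation is that although $\calV$ is only affine, its direction $W$ is a genuine linear subspace, so $M_0 + t\,w$ lies in $\calV$ and is singular for every $t \in \K$ and every $w \in W$. This restores the one-parameter families available in the purely linear situation and makes a perturbation argument possible; it is also what keeps the affine problem tractable.

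First I would normalize. Acting by an equivalence $(P,Q)\mapsto P\,\calV\,Q^{-1}$, which preserves both singularity and the affine structure, I may assume $M_0 = \begin{bmatrix} I_r & 0 \\ 0 & 0 \end{bmatrix}$. Writing a general element of $W$ in the matching block form $w = \begin{bmatrix} A & B \\ C & D \end{bmatrix}$, I analyze the constraint $\rk(M_0 + t\,w) \le r$ through the Schur complement of the top-left corner: whenever $I_r + tA$ is invertible, the rank condition becomes $tD - t^2\,C(I_r + tA)^{-1}B = 0$. Over a field with enough elements this polynomial identity in $t$ forces $D = 0$ together with $C\,A^k\,B = 0$ for all $k \ge 0$, pinning $W$ into a block-triangular shape. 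A routine bookkeeping of the block dimensions, fed into the inductive hypothesis applied to the $(n-1)\times(n-1)$ corner, then produces $\dim \calV \le n(n-1)$. Tracing the cases of equality shows that the only tight configurations are, up to equivalence, the space of matrices killing a fixed vector (last column zero) or, after transposition, the space of matrices whose image lies in a fixed hyperplane (last row zero); these are precisely the linear model of the statement and its transpose.

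The delicate point, and the source of the stated exception, is that the Schur-complement identity only probes the values $t \in \K$ that actually exist. When $\card \K = 2$ the sole nontrivial value is $t=1$, giving the single relation $D = C(I_r + A)^{-1}B$ (when $I_r + A$ is invertible) rather than the separate vanishing of $D$ and of the products $C\,A^k\,B$. For $n \ge 3$ the extra room in the recursion still forces the rigid block-triangular form, so the classification goes through; but for $n=2$ and $\K=\F_2$ this last rigidity fails. I therefore expect the main obstacle to be isolating exactly this boundary: one must show that $n \ge 3$ or $\card \K > 2$ suffices to exclude genuinely affine extremal spaces, and then settle $n=2$ directly. The case $n=2$ I would handle by an explicit examination of the affine planes of singular $2\times 2$ matrices of dimension $2$ over an arbitrary field, which both closes the induction and exhibits, over $\F_2$, the exceptional non-linear affine planes that are not equivalent to the linear model and that underlie the phenomena studied in the rest of the paper.
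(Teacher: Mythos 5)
There is a genuine gap, and it sits exactly where you locate the difficulty: the treatment of small fields. Your argument is essentially Flanders' original one: probe the direction space $W$ along lines $M_0+t\,w$, clear denominators in the Schur complement $tD-t^2\,C(I_r+tA)^{-1}B$, and use the resulting polynomial identity in $t$ to kill the coefficients separately. This needs roughly $\card \K>r$ distinct values of $t$, so it proves nothing beyond a single relation over $\F_2$ (and is already problematic over $\F_3$ for larger $r$). But the theorem asserts the bound $\dim\calV\leq n(n-1)$ over \emph{every} field and the classification over every field once $n\geq 3$; your proposed rescue, that ``the extra room in the recursion still forces the rigid block-triangular form'' for $n\geq 3$, is an assertion, not an argument, and it is precisely the hard point (the one Meshulam's paper addresses in the linear case). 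Relatedly, the induction on $n$ via the $(n-1)\times(n-1)$ corner is not clearly well-founded: after normalizing $M_0=\begin{bmatrix}I_r&0\\0&0\end{bmatrix}$, the top-left block $A$ of an element of $W$ is unconstrained in rank, so the inductive hypothesis does not obviously apply to that corner; the classification in the equality case also needs more than ``tracing the cases.''

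The paper's proof takes the same starting point (maximal-rank element, Schur complement) but replaces the scalar perturbation by a characteristic-free observation: for $M$ in the kernel $W$ of the top-left block map, the Schur relation reads $L(M)\,P_1^{-1}\,C(M)=\text{(linear form in }M\text{)}$, i.e. a quadratic form on $W$ equals a linear form. Taking the polar form $b_q(M,N)=q(M+N)-q(M)-q(N)$ of both sides --- a purely formal step valid over any field, including $\F_2$ --- yields $L(M)P_1^{-1}C(N)+L(N)P_1^{-1}C(M)=0$ for all $M,N\in W$, so $\varphi(W)=\{(L(M),C(M))\}$ is totally isotropic for a nondegenerate symmetric bilinear form on $\Mat_{n-r,r}(\K)\times\Mat_{r,n-r}(\K)$ and has dimension at most $r(n-r)$; rank--nullity then gives $\dim V\leq r^2+r(n-r)=nr$ with no induction and no hypothesis on $\card\K$. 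The equality case is then handled by a separate splitting lemma for such totally isotropic subspaces, which is also where the exceptional case $p=q=1$, $\K\simeq\F_2$ is isolated. To repair your proof you would need to replace the polynomial-in-$t$ step by something of this kind; as written, the dimension bound itself is not established for $\card\K=2$, $n\geq 3$.
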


\noindent Notice that all matrices in the plane $\biggl\{\begin{bmatrix}
x & y \\
0 & x+1
\end{bmatrix}\mid (x,y)\in \F_2^2\biggr\}$ are singular, hence the exceptional case mentioned in the theorem.

\vskip 2mm
\noindent Dieudonn\'e initially restricted his study to linear transformations because he wanted
to generalize the classical description for the orthogonal group of the quadratic space $(\Mat_2(\K),\det)$.
As we shall see, the generalization to affine transformations is extremely easy except for the exceptional case of $n=2$ and $\K \simeq \F_2$,
but it is very intriguing that Dieudonn\'e, at the time the best specialist in the theory
of classical groups, completely overlooked the connection between this exceptional case, the symplectic group $\Sp_4(\F_2)$
and the theory of quadratic forms over $\F_2$. It is our main goal to fill this bizarre hole in Dieudonn\'e's celebrated paper.

\vskip 2mm
\noindent In Section \ref{aff}, we will quickly determine the affine transformations of $\Mat_n(\K)$ which stabilize $\GL_n(\K)$
when $n>2$ of $\card \K>2$, and show briefly that we cannot expect to obtain results of the type of \cite{dSPlinpresGL}
for singular affine endomorphisms of $\Mat_n(\K)$. Our (very short) proof will involve
Theorems \ref{Dieudo1} and \ref{Dieudo2}. We will use this opportunity
to give a brand new proof of Theorem \ref{Dieudo2} in the more general formulation of Flanders
with no restriction on the field (except for the case where an explicit counter-example exists).
This is, to our knowledge, the shortest proof of this theorem, appealing only to basic linear algebra.
In the last section, we will investigate the case $n=2$ and $\K \simeq \F_2$, which will involve
the theory of quadratic forms over $\F_2$ (the reader will find proofs of the more basic statements in section 10.4 of \cite{Scharlau}).

\noindent Let us finish this introductory section by stating our main theorem:

\begin{theo}\label{affpresdSP}
Let $n$ be an integer and $\K$ an arbitrary field. Let $\calA\calG_n(\K)$ denote the group of
affine automorphisms of $\Mat_n(\K)$ which stabilize $\GL_n(\K)$.
\begin{enumerate}[(a)]
\item If $n>2$ or $\card \K>2$, then all affine automorphisms of $\Mat_n(\K)$ which stabilize
$\GL_n(\K)$ are linear, hence $\calA\calG_n(\K)=\calG_n(\K)$.
\item Not all elements of $\calA\calG_2(\F_2)$ are linear maps.
\item The natural action of $\calA\calG_2(\F_2)$
on $\GL_2(\F_2)$ induces a group isomorphism from $\calA\calG_2(\F_2)$ to the symmetric group $\frak{S}\bigl(\GL_2(\F_2)\bigr)$.
Assigning its linear part to every transformation in $\calA\calG_2(\F_2)$
induces a group isomorphism from $\calA\calG_2(\F_2)$ to the symplectic group of the form
$(A,B) \mapsto \det(A+B)-\det(A)-\det(B)$.
\end{enumerate}
\end{theo}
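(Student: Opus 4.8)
The plan is to handle the three parts in turn: (a) reduces to Dieudonn\'e's two theorems, while (c) is built from the quadratic form $q:=\det$ on $\Mat_2(\F_2)$, with (b) falling out as a by-product.

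For part (a), let $F$ be an affine automorphism of $\Mat_n(\K)$ stabilizing $\GL_n(\K)$, and write $F=\vec F+C$ with $\vec F$ its linear part and $C:=F(0)$; the case $n=1$ is trivial, so assume $n\ge 2$. Since $F$ is a bijection fixing $\GL_n(\K)$ setwise, it maps the set of singular matrices onto itself, and being affine it sends every singular affine subspace of dimension $n(n-1)$ to another one. For $j\in\lcro1,n\rcro$ let $\calH_j$ be the (linear, singular) subspace of matrices whose $j$-th column is zero; then $\dim\calH_j=n(n-1)$ and $\bigcap_j\calH_j=\{0\}$. Under the hypothesis $n>2$ or $\card\K>2$ we are outside the exceptional case of Theorem \ref{Dieudo2}, so every singular affine subspace of dimension $n(n-1)$ is — up to transposition — equivalent to a linear space, and since both equivalence and transposition preserve linearity, it is itself linear. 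Hence each $F(\calH_j)$ is a linear subspace containing $0$, which forces $C\in\vec F(\calH_j)$; intersecting over $j$ and using that $\vec F$ is bijective gives $C\in\vec F\bigl(\bigcap_j\calH_j\bigr)=\{0\}$, so $F=\vec F$ is linear and Theorem \ref{Dieudo1} yields $F\in\calG_n(\K)$.

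For (c), set $q(M):=\det M$ and let $b(A,B):=\det(A+B)-\det A-\det B$ be its polar form; over $\F_2$ this $b$ is the nondegenerate alternating form pairing $E_{1,1}$ with $E_{2,2}$ and $E_{1,2}$ with $E_{2,1}$, and $\GL_2(\F_2)=q^{-1}(1)$. I first record that every $F\in\calA\calG_2(\F_2)$ satisfies $q\circ F=q$: it permutes the six-element set $q^{-1}(1)$, hence also $q^{-1}(0)$, and $q$ takes only the values $0,1$. Writing $F=\vec F+C$ this reads $q(\vec F(M))+q(C)+b(\vec F(M),C)=q(M)$; comparing polar forms in $M$ (the constant and linear terms polarize to $0$) gives $b(\vec F(A),\vec F(B))=b(A,B)$, so the linear-part map $\Phi\colon\calA\calG_2(\F_2)\to\GL(\Mat_2(\F_2))$ lands in $\Sp(b)$. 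It is a homomorphism, and its kernel consists of translations preserving $q$, which by nondegeneracy of $b$ is trivial, so $\Phi$ is injective. Simultaneously I would define the action homomorphism $\Psi\colon\calA\calG_2(\F_2)\to\mathfrak S(\GL_2(\F_2))$; since the six invertible matrices affinely span $\Mat_2(\F_2)$, any element acting trivially on them is the identity, so $\Psi$ is injective as well.

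The crux is the surjectivity of $\Phi$, i.e.\ lifting each $g\in\Sp(b)$ to an affine preserver. Because $g$ preserves $b$, the function $\ell_g:=q\circ g-q$ is additive, hence $\F_2$-linear, so $\ell_g=b(\cdot,D_g)$ for a unique $D_g$. A short computation, using that $b$ is alternating and $g$ preserves it, shows that $F(M):=g(M)+g(D_g)$ has linear part $g$ and satisfies $q\circ F=q+q(D_g)$; thus $F$ preserves $\GL_2(\F_2)$ \emph{provided} $q(D_g)=0$. This last equality is where the real work lies, and I would obtain it from the Arf invariant: the transformation rule $\operatorname{Arf}(q+b(\cdot,D))=\operatorname{Arf}(q)+q(D)$, combined with $\operatorname{Arf}(q\circ g)=\operatorname{Arf}(q)$ (as $g$ is a linear isometry carrying $q$ to the equivalent form $q\circ g$), forces $q(D_g)=0$. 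Hence $\Phi$ is an isomorphism onto $\Sp(b)=\Sp_4(\F_2)$, so $\card\calA\calG_2(\F_2)=\card\Sp_4(\F_2)=720=\card\mathfrak S_6$; with the injectivity of $\Psi$ this makes $\Psi$ an isomorphism too. Finally, taking $g$ to be a symplectic transvection along a vector $v$ with $q(v)=0$ (say $v=E_{1,1}$) gives $D_g=v\neq0$, hence the explicit non-linear element $F(M)=g(M)+E_{1,1}$ of $\calA\calG_2(\F_2)$, which proves (b). The main obstacle throughout is precisely the identity $q(D_g)=0$, i.e.\ the correct invocation of the Arf-invariant calculus over $\F_2$.
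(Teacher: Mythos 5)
Your argument for part (a) is essentially the paper's: both use the $n(n-1)$-dimensional singular subspaces of matrices with a prescribed zero column, apply Theorem \ref{Dieudo2} to conclude that their images (resp.\ preimages) under the affine automorphism are linear, and intersect to force the translation part to vanish, after which Theorem \ref{Dieudo1} finishes. For parts (b) and (c), your proof is correct but organized quite differently. The paper introduces an auxiliary symplectic space $(\F_2^4,b)$, identifies $\Mat_2(\F_2)$ affinely with the space $\calQ(b)$ of quadratic forms with polar form $b$ via $q\mapsto M(q)$ (so that $\det M(q)=\Delta(q)$), and obtains a homomorphism $\Sp(b)\to\calA\calG_2(\F_2)$ from the action on quadratic forms; surjectivity onto $\frak{S}(\GL_2(\F_2))$ then comes from an order count together with a hands-on injectivity argument (Proposition \ref{premieriso}, exhibiting for each $u\neq\id$ a form in $\calQ_1(b)$ not preserved by $u$), and the non-linear element of (b) is read off as the transposition fixing a basis of $\Mat_2(\F_2)$. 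You instead work entirely on $(\Mat_2(\F_2),\det)$ and prove surjectivity of the linear-part map $\Phi$ directly: for $g\in\Sp(B)$ the defect $q\circ g-q$ is linear, hence equals $B(\cdot,D_g)$, and the transformation rule $\operatorname{Arf}(q+B(\cdot,D))=\operatorname{Arf}(q)+q(D)$ combined with the invariance of the Arf invariant under equivalence forces $q(D_g)=0$, so $M\mapsto g(M)+g(D_g)$ is an explicit affine lift; a symplectic transvection along $E_{1,1}$ then gives (b). The two proofs rest on the same underlying fact --- that the Arf invariant, viewed as a function on the affine space of quadratic forms with fixed polar form, is the determinant in disguise --- but your version dispenses with the auxiliary space and with the case analysis of Proposition \ref{premieriso}, at the cost of invoking (and, for completeness, you should prove or precisely reference) the Arf transformation rule under adding $B(\cdot,D)$; your injectivity arguments for $\Phi$ and $\Psi$, and the order counts $\card\Sp_4(\F_2)=720=\card\frak{S}_6$, match the paper's. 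Both approaches are sound; yours is arguably more self-contained on the $\Mat_2(\F_2)$ side and yields the lifts constructively.
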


\section{The case $n>2$ or $\card \K>2$}\label{aff}

We start by a quick proof of statement (a) in Theorem \ref{affpresdSP}.
Let $u$ be an affine automorphism of $\Mat_n(\K)$ such that $u(P)$ is non-singular for every $P \in \GL_n(\K)$.
We assume $n>2$ or $\card \K>2$.
For $i \in \lcro 1,n\rcro$, let $V_i$ denote the set of all matrices of $\Mat_n(\K)$ with a zero $i$-th column.
Then $V_i$ is a $(n^2-n)$-dimensional affine subspace of $\Mat_n(\K)$ consisting only of non-singular matrices.
It follows that the same is true of $u^{-1}(W_i)$. By Theorem \ref{Dieudo2}, we deduce that
$u^{-1}(W_i)$ is a linear subspace of $\Mat_n(\K)$. Consequently,
$$u^{-1}\{0\}=\underset{i=1}{\overset{n}{\bigcap}}\,u^{-1}(W_i)$$
is a linear subspace of $W$, hence it contains $0$, which proves $u$ is linear. Using
Theorem \ref{Dieudo1}, we conclude that $u \in \calG_n(\K)$, which essentially finishes the proof of statement (a).

\vskip 3mm
\noindent Let us now give a corollary for this part of Theorem \ref{affpresdSP}:

\begin{cor}
Let $n$ be an integer. Assume $n \neq 2$ or $\card \K>2$. Then:
\begin{enumerate}[(i)]
\item The group $\calG_n(\K)$ consists of all the affine endomorphisms $u$ of $\Mat_n(\K)$ such that
$u^{-1}(\GL_n(\K))=\GL_n(\K)$.
\item If $n>1$ or $\card \K>2$, then $\calG_n(\K)$ consists of all the affine endomorphisms $u$ of $\Mat_n(\K)$ such that
$u(\GL_n(\K))=\GL_n(\K)$.
\end{enumerate}
\end{cor}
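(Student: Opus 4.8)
The plan is to reduce both statements to part (a) of Theorem~\ref{affpresdSP}. In each case one inclusion is immediate, since every element of $\calG_n(\K)$ is a linear automorphism stabilizing $\GL_n(\K)$ and is in particular an affine endomorphism satisfying both $u^{-1}(\GL_n(\K))=\GL_n(\K)$ and $u(\GL_n(\K))=\GL_n(\K)$; so the whole content of (i) and (ii) is the reverse inclusion. Now the reasoning that establishes (a) in fact shows that \emph{every affine automorphism $u$ of $\Mat_n(\K)$ with $u(\GL_n(\K))\subseteq\GL_n(\K)$ lies in $\calG_n(\K)$} (it only uses that $u$ is bijective and maps $\GL_n(\K)$ into itself). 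Hence in each case it suffices to promote the given affine endomorphism to an affine automorphism, after which (a) finishes the proof. The only configuration within the scope of (i) not covered by (a) is $n=1$, $\card\K=2$, which is trivial: an affine bijection of $\F_2$ fixing $\{1\}=\GL_1(\F_2)$ must be the identity.

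For (i) I write the linear part of $u$ as $\ell$, so that $u(M)=\ell(M)+u(0)$, and I aim to prove $\ell$ injective, hence bijective since $\Mat_n(\K)$ is finite-dimensional. The hypothesis $u^{-1}(\GL_n(\K))=\GL_n(\K)$ gives in particular $u(\GL_n(\K))\subseteq\GL_n(\K)$. Suppose some nonzero $N$ lay in $\Ker\ell$; then $u(M+N)=u(M)$ for all $M$, so for $M\in\GL_n(\K)$ the matrix $u(M+N)=u(M)$ is invertible, whence $M+N\in u^{-1}(\GL_n(\K))=\GL_n(\K)$. This would force $M+N$ to be invertible for \emph{every} $M\in\GL_n(\K)$, which I rule out by choosing a vector $v$ with $Nv\neq 0$ and an automorphism $M\in\GL_n(\K)$ sending $v$ to $-Nv$ (legitimate, both vectors being nonzero): then $(M+N)v=0$ with $M$ invertible, a contradiction. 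Thus $\Ker\ell=\{0\}$, $u$ is an affine automorphism, and (a) applies.

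For (ii) I likewise reduce to bijectivity, this time proving $u$ surjective. The equality $u(\GL_n(\K))=\GL_n(\K)$ gives $\GL_n(\K)\subseteq\im u$, and $\im u$ is an affine subspace of $\Mat_n(\K)$ (a translate of $\im\ell$); so it is enough to show that, under the hypotheses of (ii), $\GL_n(\K)$ lies in no proper affine subspace, for then $\im u=\Mat_n(\K)$ and $u$ is onto, hence bijective, and (a) applies. To prove this affine spanning I would suppose $\GL_n(\K)\subseteq\{M\mid\tr(AM)=c\}$ for some $A\neq 0$ and scalar $c$: testing against $I$ and against the unipotent matrices $I+E_{i,j}$ ($i\neq j$) forces $A$ to be diagonal; when $\card\K>2$, letting a single diagonal entry of an invertible diagonal matrix vary over two nonzero values forces $A=0$; when $\card\K=2$ and $n\geq 3$, a fixed-point-free permutation matrix forces $c=0$ and a permutation fixing exactly one index in the support of $A$ then gives $\tr(AM)=1\neq c$. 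Either way we reach a contradiction, the excluded pair $(n,\card\K)=(1,2)$ being exactly the one where $\GL_n(\K)=\{1\}$ fails to span.

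The genuinely non-formal input, and thus the main obstacle, is the same in spirit in both parts: showing that $u$ cannot degenerate. In (i) this is the elementary lemma that adding a fixed nonzero matrix to every element of $\GL_n(\K)$ cannot preserve invertibility; in (ii) it is the affine spanning of $\Mat_n(\K)$ by $\GL_n(\K)$ away from $(n,\card\K)=(1,2)$. These are precisely the points where the hypotheses of (i) and (ii) are used, everything else being a direct appeal to part (a).
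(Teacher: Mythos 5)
Your proof is correct and follows essentially the same route as the paper: both parts are reduced to Theorem \ref{affpresdSP}(a) by proving bijectivity --- injectivity of the linear part for (i), surjectivity for (ii) --- with (ii) resting on the fact that $\GL_n(\K)$ lies in no proper affine subspace of $\Mat_n(\K)$ (the paper's Lemma \ref{GLaffengendre}). The only differences are that you supply the kernel argument explicitly where the paper merely cites \cite{dSPlinpresGL}, and you prove the spanning fact by testing against unipotent and permutation matrices instead of the paper's argument that $\tr(QAP)=a$ for all $P,Q$ would force every matrix of rank $\rk A$ to have the same trace; both versions are valid.
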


\noindent Notice that $u : \lambda \mapsto 1$ is a non-linear affine endomorphism of $\Mat_1(\F_2)$ such that
$u(\GL_1(\F_2))=\GL_1(\F_2)$.

\begin{proof}
Statement (i) derives from Theorem \ref{affpresdSP} in the very same way that
statement (ii) in Theorem 1 of \cite{dSPlinpresGL} derived from statement (iii) of that same theorem
(i.e. we show that the kernel of the linear part of $u$ is trivial). \\
Assume now $n>1$ or $\card \K>2$, and let $u$ be an affine endomorphism of $\Mat_n(\K)$ such that $u(\GL_n(\K))=\GL_n(\K)$.
It then suffices to show that $u$ is onto, which comes from the following lemma.
\end{proof}

\begin{lemme}\label{GLaffengendre}
Assume $n>1$ or $\card \K>2$. Then no strict affine subspace of $\Mat_n(\K)$ may contain $\GL_n(\K)$.
\end{lemme}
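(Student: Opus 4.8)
The plan is to prove the contrapositive statement that the affine span of $\GL_n(\K)$ is all of $\Mat_n(\K)$. Let $\calW$ be an affine subspace containing $\GL_n(\K)$. Since $I_n$ is non-singular we have $I_n\in\calW$, so the direction $V:=\calW-I_n$ is a \emph{linear} subspace of $\Mat_n(\K)$ that contains $P-I_n$ for every $P\in\GL_n(\K)$, and therefore contains every difference $P-Q$ with $P,Q\in\GL_n(\K)$. It then suffices to prove $V=\Mat_n(\K)$, and to that end I would exhibit each elementary matrix $E_{i,j}$ as such a difference of non-singular matrices.

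First I would dispose of the off-diagonal elementary matrices: for $i\neq j$ the transvection $I_n+E_{i,j}$ is triangular with $1$'s on the diagonal, hence non-singular, so $E_{i,j}=(I_n+E_{i,j})-I_n\in V$. Next come the diagonal matrices $E_{i,i}$. When $\card\K>2$, I would pick $\lambda\in\K\setminus\{0,1\}$; then $I_n+(\lambda-1)E_{i,i}$ is diagonal with non-zero diagonal entries, hence non-singular, so $(\lambda-1)E_{i,i}\in V$ and, dividing by $\lambda-1\neq0$, we get $E_{i,i}\in V$. Combined with the previous step this yields $V=\Mat_n(\K)$ whenever $\card\K>2$. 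This scaling argument also settles the case $n=1$ on its own, since $\Mat_1(\K)=\K$, the set $\GL_1(\K)=\K\setminus\{0\}$ has at least two elements, and the only affine subspace of the line $\K$ containing two distinct points is $\K$ itself.

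The one delicate point, and precisely the reason the hypothesis forbids the pair $n=1$, $\K\simeq\F_2$, is the diagonal case over $\F_2$: there the scaling trick is unavailable because $1$ is the only non-zero scalar, and indeed $I_n+E_{i,i}$ is then singular. I would instead realize $E_{i,i}$ as a genuine sum of two \emph{distinct} non-singular matrices, which is exactly what forces $n\ge2$. Choosing some $j\neq i$ and working in the $2\times2$ block indexed by the coordinates $(i,j)$, with the identity matrix on the complementary coordinates, the two matrices whose $(i,j)$-block equals $\begin{pmatrix}0&1\\1&0\end{pmatrix}$ and $\begin{pmatrix}1&1\\1&0\end{pmatrix}$ respectively are both non-singular, and over a field of characteristic $2$ they add up to $E_{i,i}$; hence $E_{i,i}=P-Q\in V$.

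Putting the three steps together gives $V=\Mat_n(\K)$ in every case allowed by the hypothesis, so $\calW=\Mat_n(\K)$, which is the claim. I expect the genuine obstacle to be only this last explicit characteristic-$2$ construction: everything else reduces to the triangularity of transvections and a one-dimensional scaling, whereas over $\F_2$ one must argue by hand that a diagonal idempotent is a sum of two invertible matrices, and check that this is impossible in the excluded $1\times1$ situation.
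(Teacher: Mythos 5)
Your proof is correct, but it takes a genuinely different route from the paper. You argue in the primal: you show directly that the direction $V=\calW-I_n$ contains every elementary matrix as a difference of two invertible matrices (transvections for the off-diagonal ones, a scaling by some $\lambda\neq 0,1$ for the diagonal ones when $\card\K>2$, and an explicit $2\times 2$ block construction over $\F_2$ where the two invertible matrices with blocks $\bigl[\begin{smallmatrix}0&1\\1&0\end{smallmatrix}\bigr]$ and $\bigl[\begin{smallmatrix}1&1\\1&0\end{smallmatrix}\bigr]$ sum to $E_{i,i}$ in characteristic $2$). The paper instead argues in the dual: a strict affine subspace containing $\GL_n(\K)$ would lie in an affine hyperplane $\{M:\tr(AM)=a\}$ with $A\neq 0$; since $\tr(QAP)=\tr(APQ)=a$ for all $P,Q\in\GL_n(\K)$, every matrix of rank $r=\rk A$ would have trace $a$, which is refuted by exhibiting two rank-$r$ matrices of different traces (e.g.\ $E_{1,1}$ and $E_{1,2}$ when $r=1$). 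The paper's argument is shorter and uniform in the field --- it needs no case split on $\card\K$ because the orbit $\{QAP\}$ sweeps out all rank-$r$ matrices at once --- whereas yours is fully constructive, isolates exactly where the hypothesis $n>1$ or $\card\K>2$ enters (the $\F_2$ diagonal case), and as a by-product shows that every elementary matrix is a single difference of two invertibles. Both are complete and elementary; your verifications (invertibility of the transvections and of the two block matrices, and the triviality of the excluded case $n=1$, $\K\simeq\F_2$) all check out.
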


\begin{proof}
The case $n=1$ is trivial so we assume $n \geq 2$.
Assume there is a linear hyperplane of $\Mat_n(\K)$ which contains $\GL_n(\K)$. Then there would
be a non-zero matrix $A \in \Mat_n(\K)$ and an integer $a$ such that $\forall P \in \GL_n(\K), \; \tr(AP)=a$. \\
It would follow that $\tr(QAP)=\tr(APQ)=a$ for every $(P,Q)\in \GL_n(\K)^2$, hence every matrix with rank $r:=\rk A$
would have trace $a$. If $r \geq 2$, the two rank $r$ matrices
$\sum_{i=1}^r E_{i,i}$ and
$E_{1,2}+E_{2,1}+\sum_{i=2}^r E_{i,i}$ have different traces.
If $r=1$, then the rank $1$ matrices $E_{1,1}$ and $E_{1,2}$ have different traces. In any case, we have a contradiction.
\end{proof}

\noindent The reader should not expect any neat description of the singular affine endomorphisms stabilizing $\GL_n(\K)$
(unlike our results on linear endomorphisms, cf.\ \cite{dSPlinpresGL}).
It is indeed very easy to build large affine subspaces consisting solely of non-singular matrices, a good example
being the subspace $I_n+T_n^{++}(\K)$, where $T_n^{++}(\K)$ denotes the subset
of strictly upper triangular matrices in $\Mat_n(\K)$ (more generally, we can take a linear subspace $V$ of nilpotent matrices
and consider the affine subspace $I_n+V$). Any affine map
$u : \Mat_n(\K) \rightarrow I_n+T_n^{++}(\K)$ then stabilizes $\GL_n(\K)$ !

\section{A Flanders theorem for affine subspaces}

This section is devoted to a generalization of Theorem \ref{Dieudo2}.
Prior to this, such a generalization had been proven by Flanders \cite{Flanders} for linear subspaces with a large enough field,
and Meshulam \cite{Meshulam} for linear subspaces with an arbitrary field. Our starting point will resemble a lot to that of Flanders,
but we will be very careful to avoid multiplying by scalars as much as possible.

\begin{Not}
For $p \in \lcro 0,n\rcro$, we set $C_p:=\Bigl\{\begin{bmatrix}
M & 0
\end{bmatrix} \mid M \in \Mat_{n,p}(\K)\Bigr\}\subset \Mat_n(\K)$.
\end{Not}

\begin{theo}\label{Flanders} ${}$ \\
Let $\calV$ be an affine subspace of $\Mat_n(\K)$, and set $r:=\max\{\rk A \mid A \in \calF\}$. \\
Then $\dim \calV \leq nr$. If in addition $\dim \calV=nr$, then
$\calV$ is equivalent to either $C_r$ or ${}^tC_r$ except when $n=2$, $\card \K=2$, $r=1$
and $0\not\in \calV$.
\end{theo}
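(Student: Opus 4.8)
The plan is to imitate Flanders' reduction to a single matrix of maximal rank, but to organise the few places where scalars intervene so that the unique failure over $\F_2$ becomes visible. After disposing of the trivial cases $r=0$ (where $\calV=\{0\}=C_0$) and $r=n$ (where $\calV=\Mat_n=C_n$), I assume $1\le r\le n-1$, so $s:=n-r\ge 1$. Let $A\in\calV$ have $\rk A=r$. Acting by the $\GL_n(\K)\times\GL_n(\K)$-equivalence, which preserves both $\dim\calV$ and the maximal rank $r$, I may assume
\[
A=\begin{bmatrix} I_r & 0\\ 0 & 0\end{bmatrix}\in\calV,
\]
and I decompose every matrix into blocks of sizes $r$ and $s$. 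Writing $\calV=A+W$ with $W$ its direction (so $\dim W=\dim\calV$), the point to exploit is that $W$ is linear: for every $N\in W$ and every $t\in\K$ the matrix $A+tN$ again lies in $\calV$, whence $\rk(A+tN)\le r$ for all $t$. I write $N=\begin{bmatrix} B & C\\ D & E\end{bmatrix}$ for the blocks of a generic $N\in W$.

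The core step is to prove that the lower-right block vanishes identically on $W$. Fix $N\in W$ and consider the $(r+1)\times(r+1)$ minor of $A+tN$ built from rows $1,\dots,r,r+i$ and columns $1,\dots,r,r+j$: it is a polynomial in $t$ of degree at most $r+1$, and expanding by the Schur complement of the invertible corner $I_r+tB$ shows that its coefficient of $t$ equals $E_{ij}$, while after $E$ has been shown to vanish the coefficient of $t^2$ reduces to $-(DC)_{ij}$. Since this minor vanishes for every admissible $t$, a careful coefficient extraction, using the full linear family $W$ rather than crude rescalings so as to spend as few scalars as possible, yields first $E=0$ and then $DC=0$ for every $N\in W$. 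Polarising $DC=0$ over the subspace $W$ gives the bilinear identity $D_1C_2+D_2C_1=0$ for all $N_1,N_2\in W$.

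This bilinear identity says exactly that the image $L$ of $W$ under $N\mapsto(C_N,D_N)$ is totally isotropic for the nondegenerate symmetric pairing $\tau\bigl((C,D),(C',D')\bigr)=\tr(DC')+\tr(D'C)$ on the hyperbolic space $\Mat_{r,s}(\K)\oplus\Mat_{s,r}(\K)$; thus $L\subseteq L^{\perp_\tau}$, whence $\dim L\le rs$ in any characteristic. As $E=0$ on $W$, the kernel of $N\mapsto(C_N,D_N)$ is contained in the $r^2$-dimensional block $\bigl\{\begin{bmatrix} B & 0\\ 0 & 0\end{bmatrix}\bigr\}$, and I obtain $\dim\calV=\dim W\le r^2+rs=nr$. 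In the equality case both estimates are sharp: $W$ contains the whole block $\bigl\{\begin{bmatrix} B & 0\\ 0 & 0\end{bmatrix}\bigr\}$ and $L$ is Lagrangian. Feeding the various admissible choices of the now free block $B$ into the rank hypothesis on $\begin{bmatrix} I_r+B & C\\ D & 0\end{bmatrix}$ (for instance $B=-I_r$, giving $\rk C+\rk D\le r$) forces $L$ to be one of the two pure Lagrangians $\Mat_{r,s}\oplus 0$ or $0\oplus\Mat_{s,r}$, that is, $\calV$ equivalent to ${}^tC_r$ or $C_r$.

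The step I expect to be the main obstacle is the vanishing of the lower-right block $E$, since it is the only genuinely scalar-dependent part of the argument: once $E=0$ is available, both the bound $\dim\calV\le nr$ and the $C_r/{}^tC_r$ dichotomy go through uniformly over every field. For $\card\K\ge r+2$ the single-variable degree count already forces $E=0$, but for small fields one must exploit the whole parameter space $W$ and contend with the Frobenius collapse $t^2=t$. That collapse is exactly what allows the plane $\bigl\{\begin{bmatrix} x & y\\ 0 & x+1\end{bmatrix}\bigr\}$ to keep $E\neq 0$ while remaining singular, since $(1+x)x=0$ for every $x\in\F_2$. I would therefore dispatch $\K\simeq\F_2$ by a direct rank computation, showing that $E=0$ is still forced except when $n=2$, $r=1$ and $0\notin\calV$; this isolates the single exceptional plane and completes the proof.
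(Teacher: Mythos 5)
Your overall architecture (bordered minors/Schur complement, a totally isotropic subspace of the hyperbolic space $\Mat_{r,s}(\K)\oplus\Mat_{s,r}(\K)$, then a splitting argument in the equality case) is the right one and matches the paper's. But your declared core step --- that the lower-right block $E$ vanishes identically on $W$ --- is not merely hard to prove over small fields: it is \emph{false} over $\F_2$, including for $n\geq 3$ where the theorem admits no exception. Take $n=3$, $\K=\F_2$, and $\calV=\Vect(J_2,N)$ with $N=E_{1,3}+E_{3,1}+E_{3,3}$. Every element of $\calV$ has rank at most $2$, $N$ lies in $W$ with zero upper-left block, yet its lower-right entry is $1$. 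So no ``direct rank computation'' can force $E=0$ outside the $n=2$ exception, and the plan as stated cannot be completed. What is true, and what the paper proves, is the identity $E=D\,C$ on $W$ (more generally $L(M)P_1^{-1}C(M)=\alpha(M)-\cdots$, a quadratic form equal to a linear form). This weaker identity is all one needs: it gives injectivity of $N\mapsto(C_N,D_N)$ on $\Ker K$ (if $C=D=0$ then $E=DC=0$), and polarizing ``quadratic $=$ linear'' yields the bilinear relation $D_1C_2+D_2C_1=0$ over \emph{every} field, because the polar form of a linear map is zero. You never need $E\equiv 0$ for the inequality; in the equality case $E=0$ falls out at the very end, after the split lemma has pinned down the Lagrangian. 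Your instinct that the scalar-dependence is concentrated in this step is correct, but the resolution is to avoid asserting $E=0$ altogether, not to salvage it over $\F_2$.

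Two secondary points. First, your isotropy and kernel arguments are run over all of $W$ (the full direction space), where the corner $I_r+tB$ need not be invertible over $\F_2$; the paper restricts to $\Ker K$ (direction vectors with vanishing upper-left block), where the corner stays equal to $P_1$, and then uses $\dim V=\rk K+\dim\Ker K\leq r^2+rs$. Second, the equality-case endgame (``feeding choices of $B$ \ldots forces $L$ to be one of the two pure Lagrangians'') is substantially under-justified: this is the content of the paper's Lemma~\ref{splitlemma}, a genuine page of work which needs the relations $D_1PC_2+D_2PC_1=0$ for \emph{all} invertible $P$ (obtained by varying the upper-left block over $\GL_r(\K)$), not just the single consequence $\rk C+\rk D\leq r$.
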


\noindent The proof will involve the following lemma, which we will prove right away:

\begin{lemme}\label{splitlemma}
Let $H$ be a $pq$-dimensional linear subspace of $\Mat_{p,q}(\K) \times \Mat_{q,p}(\K)$ with:
\begin{enumerate}[(i)]
\item $\forall \bigl((L,C),(L',C')\bigr)\in H^2, \; \forall P \in \GL_q(\K), \; LPC'+L'PC=0$;
\item if $\card \K>2$ and $p=q=1$, then $\forall (L,C)\in H, \; \forall P \in \GL_q(\K), \; LPC=0$.
\end{enumerate}
Then either $H=\Mat_{p,q}(\K) \times \{0\}$ or $H=\{0\} \times \Mat_{q,p}(\K)$, or $p=q=1$ and $\K \simeq \F_2$.
\end{lemme}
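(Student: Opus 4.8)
The plan is to first upgrade the hypotheses from invertible $P$ to arbitrary $P$, then split off a ``degenerate'' part of $H$ from a ``graph'' part, and finally break the graph part with elementary test matrices. I would begin by recording two elementary facts. First, it is classical that $\GL_q(\K)$ spans $\Mat_q(\K)$ as a $\K$-vector space: a linear form $X \mapsto \tr(AX)$ vanishing on every invertible matrix forces, through $I$ and the transvections $I+E_{i,j}$, the matrix $A$ to be diagonal of zero trace, and one further well-chosen invertible matrix (a diagonal one when $\card \K>2$, a $3$-cycle permutation when $\K\simeq \F_2$) then yields $A=0$. Since $P \mapsto LPC'+L'PC$ and $P\mapsto LPC$ are linear, hypotheses (i) and (ii) therefore hold for \emph{all} $P \in \Mat_q(\K)$. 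Second, for fixed $A \in \Mat_{p,q}(\K)$ and $B \in \Mat_{q,p}(\K)$ one has $APB=0$ for all $P \in \Mat_q(\K)$ if and only if $A=0$ or $B=0$: testing $P=xy^t$ gives $APB=(Ax)(B^t y)^t$, which is nonzero for suitable $x,y$ as soon as both $A$ and $B$ are nonzero.

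Next I would set $V_0:=\{L \mid (L,0)\in H\}$ and $W_0:=\{C \mid (0,C)\in H\}$. Feeding the pair $\bigl((L,C),(L'',0)\bigr)$ with $L''\in V_0$ into (i) gives $L''PC=0$ for all $P$, so by the elementary fact, if $V_0 \neq \{0\}$ then $C=0$ for every $(L,C)\in H$; thus $H \subseteq \Mat_{p,q}(\K)\times\{0\}$, and the dimension hypothesis $\dim H=pq$ forces $H=\Mat_{p,q}(\K)\times\{0\}$. Symmetrically, $W_0\neq\{0\}$ yields $H=\{0\}\times\Mat_{q,p}(\K)$. Hence I may assume $V_0=W_0=\{0\}$, which means both projections of $H$ onto $\Mat_{p,q}(\K)$ and onto $\Mat_{q,p}(\K)$ are injective, hence bijective since all three spaces have dimension $pq$. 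Therefore $H$ is the graph of a linear isomorphism $\phi:\Mat_{p,q}(\K) \to \Mat_{q,p}(\K)$, and (i) becomes the identity $LP\phi(L')+L'P\phi(L)=0$ for all $L,L'\in\Mat_{p,q}(\K)$ and all $P\in\Mat_q(\K)$.

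The heart of the argument, and the step I expect to be the main obstacle, is to show that no such $\phi$ exists unless $p=q=1$. I would test the identity on elementary matrices $L=E_{a,b}$, $L'=E_{c,d}$. If $p\geq 2$, I choose $a\neq c$: then $E_{a,b}P\phi(E_{c,d})$ is supported on its $a$-th row while $E_{c,d}P\phi(E_{a,b})$ is supported on its $c$-th row, so the two summands must vanish separately; since the $a$-th row of $E_{a,b}P\phi(E_{c,d})$ equals (the $b$-th row of $P$)$\,\phi(E_{c,d})$, letting that row run over all of $\K^{1\times q}$ forces $\phi(E_{c,d})=0$, contradicting injectivity. If $p=1$ and $q\geq 2$, I write $\gamma_d:=\phi(E_{1,d})\in\K^q$, so the identity reads $e_b^t P\gamma_d+e_d^t P\gamma_b=0$; testing $P=e_s e_t^t$ with $s=b\neq d$ collapses this to $(\gamma_d)_t=0$ for every $t$, that is $\gamma_d=0$, again absurd. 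The point worth stressing is that both computations avoid any division, so they are valid in every characteristic; this is exactly what leaves the single cell $p=q=1$ as the only survivor.

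It remains to treat $p=q=1$, where $\phi$ is multiplication by some scalar $c\neq 0$. If $\card \K>2$, hypothesis (ii) applies and gives $LP\phi(L)=cL^2P=0$ for all $L$ and $P$; taking $L=P=1$ yields $c=0$, a contradiction, so this degenerate case cannot occur and $H$ is one of the two coordinate subspaces. If $\card \K=2$, then $\K\simeq\F_2$, which is precisely the exception allowed in the statement (there $H=\{(x,x)\mid x\in\F_2\}$ does satisfy all the hypotheses while being neither coordinate subspace). This completes the plan.
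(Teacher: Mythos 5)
Your proof is correct, and while the first half coincides with the paper's (both arguments begin by showing that a nonzero element of the form $(L'',0)$ or $(0,C)$ in $H$ forces $H$ to be one of the two coordinate subspaces, and otherwise reduce to the case where $H$ is a graph), the way you then kill the graph is genuinely different. The paper never leaves $\GL_q(\K)$: its key tool is the remark that $\sum_{P \in \GL_q(\K)} \im PC=\K^q$ for $C\neq 0$, from which it shows $\alpha$ is injective, deduces $\im L' \subseteq \im L$ for a fixed rank-one $L$, forces $p=1$, and finally constructs an explicit invertible $P$ realizing a contradiction when $q>1$. You instead linearize in $P$ first, using the classical fact that $\GL_q(\K)$ spans $\Mat_q(\K)$ to extend (i) and (ii) to all of $\Mat_q(\K)$, after which rank-one matrices $P=xy^t$ and elementary matrices $E_{a,b}$ dispatch every case $(p,q)\neq(1,1)$ by a short support argument with no case distinction on the field. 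Your route is more uniform and purely computational, and it also gets $H$ as the graph of an \emph{isomorphism} directly (both projections injective), whereas the paper must separately rule out a non-injective $\alpha$; the price is importing the spanning lemma, which the paper does invoke elsewhere (as ``the classical result $\Vect(\GL_r(\K))=\Mat_r(\K)$'') but deliberately avoids inside this proof. One small caveat: your parenthetical justification of that spanning fact is too quick over $\F_2$ --- after reducing to a traceless diagonal $A$, a single $3$-cycle permutation does not suffice (and does not exist when $q=2$); you also need, say, transposition matrices to equalize the diagonal entries, or for $q=2$ a matrix such as $\left[\begin{smallmatrix}1&1\\1&0\end{smallmatrix}\right]$. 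Since the fact itself is standard and true in all cases, this does not affect the validity of your argument.
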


\begin{proof}
It suffices to show that $H \subset \Mat_{p,q}(\K) \times \{0\}$ or $H \subset \{0\} \times \Mat_{q,p}(\K)$
except in the exceptional case mentioned above.
We will make great use of the following easy fact:
for any $C \in \Mat_{q,p}(\K)\setminus \{0\}$, one has $\sum_{P \in \GL_q(\K)} \im PC=\K^q$. \\
Assume $(L,C) \mapsto L$ is not one-to-one from $H$ and choose a non-zero $(0,C)$ in its kernel.
Let $(L',C') \in H$. Then (i) shows $\forall P \in \GL_q(\K), \; L'PC=0$ hence $L'=0$ by the preliminary remark, and
$(L',C') \mapsto C'$ is then one-to-one from $H$. In any case, replacing $H$ with $\bigl\{(B^t,A^t)\mid (A,B)\in H\bigr\}$ shows we can assume
$(L,C) \mapsto L$ is one-to-one on $H$, hence a linear isomorphism from $H$ to $\Mat_{p,q}(\K)$,
which entails that $H=\bigl\{(L,\alpha(L))\mid L \in \Mat_{p,q}(\K)\bigr\}$ for some linear map $\alpha$.
\vskip 2mm
\noindent Assume $\card \K>2$ and $p=q=1$. Let $L \in \Mat_{p,q}(\K) \setminus \{0\}$. Then $L\,P\,\alpha(L)=0$ for every $P \in \GL_q(\K)$, hence
$\alpha(L)=0$ using the preliminary remark, and we deduce that $H \subset \Mat_{p,q}(\K) \times \{0\}$.
\vskip 2mm
\noindent Assume now $(p,q) \neq (1,1)$ and $\alpha$ is non-zero. Assume there exists $L \in \Mat_{p,q}(\K) \setminus \{0\}$
such that $\alpha(L)=0$. For every $L' \in \Mat_{p,q}(\K)$, we then have $\forall P \in \GL_n(\K), \; L\,P\,\alpha(L')=0$
hence $\alpha(L')=0$ (using the preliminary remark again). This contradicts a previous assumption so $\alpha$ is actually one-to-one. \\
Let $L \in \Mat_{p,q}(\K) \setminus \{0\}$. Then
$\alpha(L) \neq 0$, so the identity $\forall P \in \GL_q(\K), \; L'\,P\,\alpha(L)=-L\,P\,\alpha(L')$
and the preliminary remark entail that $\im L' \subset \im L$ for every $L' \in \Mat_{p,q}(\K)$.
This shows $p=1$ since $L$ can be chosen with rank $1$. Then $q>1$, we can choose linearly independent $L$ and $L'$ in $\Mat_{1,q}(\K)$,
so $\alpha(L)$ and $\alpha(L')$ are linearly independent;
we can also choose $Y_1 \in \Ker L \setminus \{0\}$ and $Y_2\in \K^q \setminus (\Ker L \cup \Ker L')$, and there
exists then some $P \in \GL_q(\K)$ such that $P\,\alpha(L')=Y_1$ and $P\,\alpha(L)=Y_2$.
This yields $L\,P\,\alpha(L')+L'\,P\,\alpha(L)=L'Y_2 \neq 0$, contradicting (i).
\end{proof}

\begin{proof}[Proof of Theorem \ref{Flanders}]
The case $n=2$, $\K \simeq \F_2$ and $\calV$ is a linear subspace is straightforward with the help of the following easy lemma:
given two rank $1$ matrices $M$ and $N$ of $\Mat_n(\K)$, if $M+N$ has rank $1$, then $\Ker M=\Ker N$ or $\im M=\im N$. \\
We now discard the case $n=2$ and $\card \K=2$, and let $V$ denote the translation vector space of $\calV$. \\
Let $A=\begin{bmatrix}
P_1 & C_1 \\
L_1 & \alpha_1
\end{bmatrix} \in \calV$, with blocks $P_1$, $L_1$, $C_1$ and $\alpha_1$ respectively of size
$(r,r)$, $(n-r,r)$, $(r,n-r)$ and $(n-r,n-r)$
(in the rest of the proof, all the block decompositions will have the same configuration). We assume $\rk P_1=r$. \\
Gaussian elimination shows $A$ is equivalent to
$\begin{bmatrix}
P_1 & C_1 \\
0 & \alpha_1-L_1\,P_1^{-1}\, C_1
\end{bmatrix}$, hence
\begin{equation}\label{fundit}
L_1\,P_1^{-1}\, C_1-\alpha_1=0.
\end{equation}
Write every $M \in V$ as
$M=\begin{bmatrix}
K(M) & C(M) \\
L(M) & \alpha(M)
\end{bmatrix}$, set $W:=\Ker K$ i.e. $W$ is the linear subspace of matrices of $V$ having the form
$\begin{bmatrix}
0 & ? \\
? & ?
\end{bmatrix}$. Set also $E:=\Mat_{n-r,r}(\K) \times \Mat_{r,n-r}(\K)$ and $\varphi(M):=(L(M),C(M))\in E$ for any $M \in W$. \\
For every $M \in W$, the matrix $A+M$ belongs to $\calV$ and has $P_1$ as left-upper block, hence \eqref{fundit} shows:
$$(L(M)+L_1)\,P_1^{-1}\,(C(M)+C_1)-\alpha_1-\alpha(M)=0.$$
Subtracting \eqref{fundit}, we deduce:
\begin{equation}\label{general}
\forall M \in W, \; L(M)\,P_1^{-1}C(M)=\alpha(M)-L(M) \,P_1^{-1}\, C_1-L_1\, P_1^{-1}\,C(M).
\end{equation}
Notice that the left-hand side of the equality is a quadratic form $q$ of $M$ on $V$, whilst the right-hand side is a linear form.
We deduce\footnote{For (ii), compute the polar form of $q$ as defined by $b_q(M,N):=q(M+N)-q(M)-q(N)$.}:
\begin{enumerate}[(i)]
\item if $\card K>2$, then $\forall M \in W, \; L(M)\,P_1^{-1}\,C(M)=0$;
\item in any case, $\forall (M,N) \in W^2, \; L(M)\,P_1^{-1}\,C(N)+L(N)\,P_1^{-1}\,C(M)=0$.
\end{enumerate}
Using an equivalence, we lose no generality assuming that $\calV$ contains $J_r:=\begin{bmatrix}
I_r & 0 \\
0 & 0
\end{bmatrix}$. \\
Applying \eqref{general} to $A=J_r$ shows $\varphi$ is one-to-one\footnote{Indeed, $\bigl(L(M),C(M)\bigr)=(0,0)$ entails $\alpha(M)=L(M)\,C(M)=0$.}
and $\varphi(W)$ is a totally singular subspace for the non-degenerate symmetric bilinear form
$b : \bigl((L,C),(L',C')\bigr) \mapsto \tr(LC'+L'C)$ on $E$. Hence $\rk \varphi \leq (\dim E)/2=r\,(n-r)$ and the rank theorem shows:
$$\dim V =\rk K+\dim W= \rk K+\rk \varphi \leq r^2+r\,(n-r)=n\,r.$$
Assume now $\dim V=n\,r$. Then $K$ is onto and $\varphi(W)$ has dimension $r\,(n-r)$.
For every $P \in \GL_r(\K)$, we can then find some $A\in \calV$ with $P^{-1}$ as first block.
Lemma \ref{splitlemma} thus applies to $\varphi(W)$. By transposing $\calV$ if necessary, we lose no generality assuming
$\varphi(W)=\Mat_{n-r,r}(\K) \times \{0\}$, in which case\footnote{Using again $L(M)\,C(M)=\alpha(M)$ for every $M \in V$.}
$W$ consists of all matrices of the form $\begin{bmatrix}
0 & 0 \\
? & 0
\end{bmatrix}$. The factorization lemma for affine maps helps us then recover affine maps
$\tilde{C}$ and $\tilde{\alpha}$ such that
$$\calV=\Biggl\{
\begin{bmatrix}
A & \tilde{C}(A) \\
B & \tilde{\alpha}(A)
\end{bmatrix} \mid A \in \Mat_r(\K), \;B\in \Mat_{n-r,r}(\K)\Biggr\}$$
and we have seen that $\tilde{C}(0)=0$ and $\tilde{\alpha}(0)=0$, hence $\tilde{C}$ and $\tilde{\alpha}$ are linear! \\
Let $P \in \GL_r(\K)$. Then identity \eqref{fundit} shows $\forall B \in \Mat_{n-r,r}(\K), \; BP^{-1} \tilde{C}(P)=\alpha(P)$.
Taking $B=0$ shows $\alpha(P)=0$, then taking all possible $B$'s shows $\tilde{C}(P)=0$.
The classical result $\Vect \bigl(\GL_r(\K)\bigr)=\Mat_r(\K)$ (use Lemma \ref{GLaffengendre} or more simply Lemma 3 of \cite{dSPlinpresGL})
then entails $\tilde{\alpha}=0$, $\tilde{C}=0$ and~$\calV=C_r$.
\end{proof}

\noindent The next corollary will easily follow with a standard line of reasoning.

\begin{cor}
Given positive integers $n>p$, let $\calV$ be an affine subspace of $\Mat_{n,p}(\K)$ and set $r:=\max\{\rk A \mid A \in \calF\}$.
Then $\dim \calV \leq nr$. \\
If in addition $\dim \calV=nr$, then
$\calV$ is equivalent to $\Bigl\{\begin{bmatrix}
M & 0
\end{bmatrix} \mid M \in \Mat_{n,r}(\K)\Bigr\}$.
\end{cor}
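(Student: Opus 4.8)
The plan is to reduce everything to the square case of Theorem~\ref{Flanders} by padding with zero columns. Let $\iota\colon\Mat_{n,p}(\K)\to\Mat_n(\K)$ be the injective linear map sending $M$ to the square matrix $\begin{bmatrix} M & 0\end{bmatrix}$ obtained by appending $n-p$ zero columns, and put $\widehat{\calV}:=\iota(\calV)$. Appending zero columns changes neither the rank nor the dimension, so $\widehat{\calV}$ is an affine subspace of $\Mat_n(\K)$ with $\dim\widehat{\calV}=\dim\calV$ and $\max\{\rk A\mid A\in\widehat{\calV}\}=r$. Theorem~\ref{Flanders} then gives $\dim\calV=\dim\widehat{\calV}\le nr$, which settles the bound; the case $r=0$ being trivial, I assume $r\ge1$ from now on.

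Suppose $\dim\calV=nr$, so that $\dim\widehat{\calV}=nr$ and the equality part of Theorem~\ref{Flanders} applies to $\widehat{\calV}$. First I would rule out the exceptional configuration: it would force $n=2$, hence $p=1$ and $r\le1$, together with $0\notin\widehat{\calV}$; but $\dim\calV=nr=2$ forces $\calV=\Mat_{2,1}(\F_2)$, which contains $0$, a contradiction. Thus $\widehat{\calV}$ is equivalent to $C_r$ or to ${}^tC_r$.

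The heart of the argument is a descent through the common kernel. For an affine subspace $\calW\subseteq\Mat_{n,q}(\K)$, set $\calN(\calW):=\{x\mid Mx=0\ \text{for all}\ M\in\calW\}$; a one-line computation gives $\calN(P\calW Q^{-1})=Q\,\calN(\calW)$ for $P\in\GL_n(\K)$, $Q\in\GL_q(\K)$, so $\dim\calN(\calW)$ is an invariant of the equivalence class. Since every matrix of $\widehat{\calV}$ has its last $n-p$ columns zero, $\calN(\widehat{\calV})$ contains the span of the last $n-p$ canonical basis vectors and so has dimension at least $n-p\ge1$, whereas $\calN({}^tC_r)=\{0\}$; hence $\widehat{\calV}$ is equivalent to $C_r$, and $\dim\calN(\widehat{\calV})=\dim\calN(C_r)=n-r$. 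Writing $x=(x',x'')\in\K^p\times\K^{n-p}$ one checks directly that $\calN(\widehat{\calV})=\calN(\calV)\times\K^{n-p}$, whence $\dim\calN(\calV)=p-r$. Choosing $S\in\GL_p(\K)$ that maps $\calN(\calV)$ onto the span of the last $p-r$ canonical vectors, every matrix of $\calV S^{-1}$ has its last $p-r$ columns zero, i.e. $\calV S^{-1}\subseteq\bigl\{\begin{bmatrix} M & 0\end{bmatrix}\mid M\in\Mat_{n,r}(\K)\bigr\}$; both sides have dimension $nr$ and the right-hand side is linear, so they coincide, exhibiting $\calV$ as equivalent to the announced subspace.

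The step I expect to be delicate is precisely this descent: Theorem~\ref{Flanders} only offers the dichotomy $C_r$ versus ${}^tC_r$ inside the ambient square space, and one must simultaneously select the correct alternative and transport the conclusion back into $\Mat_{n,p}(\K)$ using a change of coordinates on the columns alone, i.e. an element of $\GL_p(\K)$. Organizing the whole equality case around the single invariant $\dim\calN(\,\cdot\,)$ achieves both goals at once, which is why I would structure the proof this way.
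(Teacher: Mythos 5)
Your proposal is correct and follows essentially the same route as the paper: embed $\calV$ into $\Mat_n(\K)$ by appending zero columns, apply Theorem~\ref{Flanders}, exclude ${}^t C_r$ via the nontriviality of the common kernel, and descend through that kernel (the paper's subspace $G=F\cap(\K^p\times\{0\})$ plays exactly the role of your $\calN(\calV)$) to identify $\calV$ up to equivalence. Your explicit dismissal of the exceptional case $n=2$, $r=1$ matches the paper's parenthetical remark.
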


\begin{proof}
We embed $\calV$ into  $\Mat_n(\K)$ as an affine subspace $\calV'$ by mapping any $M\in V$ to $\begin{bmatrix}
M & 0
\end{bmatrix}$. Then Theorem \ref{Flanders} applies to $\calV'$
(notice that the exceptional case $n=2$ and $r=1$ does not occur), hence $\dim \calV=\dim \calV' \leq n\,r$. \\
Assume now $\dim \calV'=n\,r$. Notice that $\calV'$ cannot be equivalent to ${}^tC_r$ (the intersection of kernels
of matrices of $\calV'$ would be $\{0\}$, which is not the case). Hence $\calV'$ is equivalent to $C_r$,
which shows there is a $(n-r)$-dimensional subspace $F$ of $\K^n$ on which every $M' \in \calV'$ vanishes.
Then every $M \in \calV$ vanishes on $G:=F \cap (\K^p\times \{0\})$, with $\dim G\geq p-r$.
Since $\dim \calV=n\,r$, we deduce that $\dim G=p-r$ and $\calV$ is the set of matrices of $\Mat_{n,p}(\K)$ vanishing on $G$.
Hence $\calV$ is clearly equivalent to the linear subspace $\Bigl\{\begin{bmatrix}
M & 0
\end{bmatrix} \mid M \in \Mat_{n,r}(\K)\Bigr\}$ of $\Mat_{n,p}(\K)$.
\end{proof}

\section{The case of $\Mat_2(\F_2)$}

\subsection{A quick review of quadratic forms in characteristic $2$}

Let $\K$ be a field of characteristic $2$ and $V$ an $n$-dimensional vector space over $\K$. \\
A quadratic form on $V$ is a map of the type $q : x \mapsto b(x,x)$ for some
(\emph{a priori} non-symmetric) bilinear form $b : V \times V \rightarrow \K$. To such a quadratic form $q$ is assigned
a \emph{polar form} $b_q : (x,y) \mapsto q(x+y)-q(x)-q(y)$, which is always an \emph{alternate} bilinear form.
We say that $q$ is \emph{regular} (or \emph{non-degenerate}) when its polar form is non-degenerate, i.e. symplectic
(notice this implies that $n$ is even).
\vskip 2mm
\noindent Two quadratic forms $q_1$ and $q_2$ on $V$ are called \emph{equivalent} when there exists a linear automorphism $u$ of $V$
such that $q_2=q_1\circ u$.
\vskip 2mm
\noindent Given a basis $\bfB$ of $V$, a \emph{representing matrix} for $q$ in $\bfB$ is a matrix $A\in \Mat_n(\K)$
such that $q(x)=X^t\,A\,X$ for every $x \in V$ with associated column matrix $X$ in $\bfB$.
In particular, $q$ is represented in $\bfB$ by a unique upper triangular matrix $T$, the ``alternate part" $T+T^t$ of
which represents $b_q$ in $\bfB$.
\vskip 2mm
\noindent Assume now $q$ is regular, and let $\bfB$ denote a symplectic basis for $b_q$.
Then there exist two diagonal matrices $D_1$ and $D_2$ such that
$\begin{bmatrix}
D_1 & I_n \\
0 & D_2
\end{bmatrix}$ represents $q$ in $\bfB$.
Setting $\calP(\K):=\{x^2+x \mid x \in \K\}$, which is a subgroup of $(\K,+)$, it can then be proven that the class $\Delta(q)$ of $\tr(D_1D_2)$
in the quotient group $\K/\calP(\K)$ is independent on the choice of $\bfB$: this is called the \textbf{Arf invariant} of $q$.
Two equivalent regular quadratic forms have the same Arf invariant, and the converse is true if $\K$ is perfect, in particular
when $\K$ is finite. Notice that $\calP(\K)=\{0\}$ when $\K=\F_2$, in which case the Arf invariant is naturally considered as an element of $\F_2$.

\subsection{From $\Sp_4(\F_2)$ to affine automorphisms of $\Mat_2(\F_2)$ \ldots}

We set here $V=\F_2^4$ which we equip with the canonical symplectic form $b$ (for which the canonical basis $\bfB$ is symplectic).
The set $\calQ(b)$ of all quadratic forms on $V$ with polar form $b$ is an affine subspace of $\F_2^V$
with the dual space $V^\star$ as translation vector space. Given $u \in \Sp(b)$ and $q \in \calQ(b)$,
the quadratic form $q \circ u$ has polar form $b$, hence $(q,u) \mapsto q \circ u^{-1}$
defines a left-action of $\Sp(b)$ on $\calQ(b)$.
We set
$$\psi : \Sp(b) \rightarrow \frak{S}(\calQ(b))$$
as the associated homomorphism and notice that, for every $u \in \Sp(b)$, the map
$\psi(u)$ is an affine transformation which preserves the Arf invariant.

\noindent Let $q \in \calQ(b)$. Since $\bfB$ is a symplectic basis, there is a unique
$M(q)=\begin{bmatrix}
a & d \\
b & c
\end{bmatrix}$ such that $q$ is represented in $\bfB$ by the upper triangular matrix
$$\begin{bmatrix}
a & 0 & 1 & 0 \\
0 & b & 0 & 1 \\
0 & 0 & c & 0 \\
0 & 0 & 0 & d
\end{bmatrix}.$$
Notice that $q \mapsto M(q)$ is an affine isomorphism from $\calQ(b)$ to $\Mat_2(\F_2)$
and $\forall q \in \calQ(b), \; \det M(q)=\Delta(q)$. In particular, the set $\calQ_1(b)$ of elements $q \in \calQ(b)$ with $\Delta(q)=1$
has six elements.

\begin{prop}\label{premieriso}
The homomorphism $\overline{\psi} : \Sp(b) \mapsto \frak{S}(\calQ_1(b))$ induced by $\psi$ is an isomorphism.
\end{prop}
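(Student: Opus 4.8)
The plan is to prove that $\overline{\psi}$ is injective and then to conclude by a cardinality count, since both groups will turn out to have order $720$. To begin, I would observe that $\overline{\psi}$ is well defined: as recalled just above the statement, each $\psi(u)$ is an affine transformation preserving the Arf invariant, hence it maps the six-element set $\calQ_1(b)=\{q\mid \Delta(q)=1\}$ bijectively onto itself. Restricting and corestricting $\psi$ therefore produces a group homomorphism $\overline{\psi}:\Sp(b)\to\frak{S}(\calQ_1(b))$.

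The core of the argument is the injectivity. Suppose $u$ lies in the kernel, so that $q\circ u^{-1}=q$ for every $q\in\calQ_1(b)$. In characteristic $2$, the difference of two elements of $\calQ(b)$ has vanishing polar form, hence is additive, and over $\F_2$ this forces linearity; thus $q_1-q_2\in V^\star$ for all $q_1,q_2\in\calQ_1(b)$. For any such pair we get $(q_1-q_2)\circ u^{-1}=q_1\circ u^{-1}-q_2\circ u^{-1}=q_1-q_2$, so $u^{-1}$ fixes every difference $q_1-q_2$.

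The key point is then that these differences span the whole direction space $V^\star$. Here I would exploit the affine isomorphism $q\mapsto M(q)$ from $\calQ(b)$ onto $\Mat_2(\F_2)$, under which $\calQ_1(b)$ corresponds exactly to $\GL_2(\F_2)$ because $\det M(q)=\Delta(q)$. By Lemma \ref{GLaffengendre}, applied with $n=2$ and $\K=\F_2$ (the hypothesis $n>1$ holds), $\GL_2(\F_2)$ lies in no strict affine subspace of $\Mat_2(\F_2)$; transporting this back, $\calQ_1(b)$ affinely spans $\calQ(b)$, so the differences $q_1-q_2$ span $V^\star$. Since $u^{-1}$ fixes a spanning family of linear forms, every $x\in V$ satisfies $\ell(u^{-1}(x))=\ell(x)$ for all $\ell\in V^\star$, whence $u^{-1}=\id$ and $u=\id$. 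This gives injectivity.

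Finally, I would close by comparing orders: $\card \Sp_4(\F_2)=2^{4}(2^{2}-1)(2^{4}-1)=720=6!=\card \frak{S}(\calQ_1(b))$, so an injective homomorphism between finite groups of equal order is an isomorphism. I expect the only genuinely delicate step to be the claim that the differences span $V^\star$: the whole argument hinges on correctly transferring the spanning property of $\GL_2(\F_2)$ through the affine isomorphism $q\mapsto M(q)$, after which the cardinality count renders the conclusion immediate.
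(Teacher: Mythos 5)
Your proof is correct, and while it shares the paper's overall skeleton (prove injectivity, then conclude from $\card\Sp_4(\F_2)=6!=720$), the injectivity step is carried out by a genuinely different argument. The paper argues pointwise and constructively: given $u\in\Sp(b)\setminus\{\id_V\}$, it picks $x$ with $u(x)\neq x$, notes that $x$ and $u(x)$ are independent, and explicitly exhibits (with a case distinction on the value of $b(x,u(x))$, writing down representing matrices in a suitable symplectic basis) a form $q\in\calQ_1(b)$ with $q(x)=0$ and $q(u(x))=1$, so that $\psi(u)[q]\neq q$. You instead argue globally: a kernel element fixes every $q\in\calQ_1(b)$, hence fixes every difference $q_1-q_2\in V^\star$, and these differences span $V^\star$ because, under the affine isomorphism $M$, the set $\calQ_1(b)$ corresponds to $\GL_2(\F_2)$, which by Lemma \ref{GLaffengendre} lies in no strict affine subspace of $\Mat_2(\F_2)$; a linear automorphism whose transpose fixes a spanning family of linear forms must be the identity. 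Both arguments are sound. Yours is arguably slicker, dispenses with the case analysis and the explicit matrices, and reuses a lemma already proved; the cost is that it imports the isomorphism $M$ and Lemma \ref{GLaffengendre} into this proposition, whereas the paper keeps this proof self-contained within the symplectic setting and only deploys that lemma later, in the proof of Proposition \ref{deuxiemeiso}, where it plays exactly the analogous injectivity role for $\calA\calG_2(\F_2)\rightarrow\frak{S}(\GL_2(\F_2))$. Your observation that differences of elements of $\calQ(b)$ are additive, hence $\F_2$-linear, is precisely the fact the paper records when it states that $\calQ(b)$ is an affine space with translation vector space $V^\star$, so no extra verification is needed there.
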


\begin{Rem}
It can easily deduced from there that $O(q) \simeq \frak{S}_5$ for any $q \in \calQ_1(b)$.
\end{Rem}

\begin{proof}
Notice that both groups $\Sp(b)$ and $\frak{S}(\calQ_1(b))$ have order $6!$ (see page 147 paragraph III.6 of \cite{Artin}).
It will then suffice to show that $\overline{\psi}$ is one-to-one.
Let $u \in \Sp(b) \setminus \{\id_V\}$. We will exhibit a quadratic form
$q$ in $\calQ_1(b)$ for which $u$ is not an orthogonal automorphism, so that $\psi(u)[q] \neq q$.
We choose a non-zero $x \in V$ such that $u(x) \neq x$, hence $x$ and $u(x)$ are not colinear.
We then exhibit a quadratic form $q \in \calQ_1(b)$ such that $q(x)=0$ and $q(u(x))=1$.
\begin{itemize}
\item If $b(x,u(x))=0$, then we extend $(x,u(x))$ into a symplectic basis $\bfB$ of $V$, so
$\begin{bmatrix}
0 & 0 & 1 & 0 \\
0 & 1 & 0 & 1 \\
0 & 0 & 0 & 0 \\
0 & 0 & 0 & 1
\end{bmatrix}$ represents in $\bfB$ a quadratic form that suits our needs.
\item If $b(x,u(x))=1$, then we choose a symplectic basis $(y,z)$ of $\{x,u(x)\}^\bot$, so
$\begin{bmatrix}
0 & 1 & 0 & 0 \\
0 & 1 & 0 & 0 \\
0 & 0 & 1 & 1 \\
0 & 0 & 0 & 1
\end{bmatrix}$ represents in $(x,u(x),y,z)$ a quadratic form that suits our needs.
\end{itemize}
\end{proof}

\noindent The affine isomorphism $q \mapsto M(q)$ induces an isomorphism from the affine group of
$\calQ(b)$ to that of $\Mat_2(\F_2)$ which maps the preservers of the Arf invariant to the preservers of $\GL_2(\F_2)$.
Right-composing it with $\psi$ yields then a group homomorphism $\varphi : \Sp(b) \rightarrow \calA\calG_2(\F_2)$.
Together with the homomorphism associated with the natural action of $\calA\calG_2(\F_2)$ on $\GL_2(\F_2)$, this defines a sequence:
$$\Sp(b) \overset{\varphi}{\longrightarrow} \calA\calG_2(\F_2) \longrightarrow \frak{S}(\GL_2(\F_2)).$$

\begin{prop}\label{deuxiemeiso}
The homomorphism $\Sp(b) \rightarrow \calA\calG_2(\F_2)$ induced by $\psi$ and $M$ is an isomorphism, and so is
the natural homomorphism $\calA\calG_2(\F_2) \rightarrow \frak{S}(\GL_2(\F_2))$
\end{prop}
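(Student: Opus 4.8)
The plan is to route everything through the composite
$$\Sp(b) \overset{\varphi}{\longrightarrow} \calA\calG_2(\F_2) \overset{\theta}{\longrightarrow} \frak{S}(\GL_2(\F_2)),$$
where $\theta$ denotes the natural homomorphism attached to the action on $\GL_2(\F_2)$, and to recognize this composite as a disguised copy of the isomorphism $\overline{\psi}$ from Proposition \ref{premieriso}. First I would unwind the definition of $\varphi$: since $\varphi(u)=M\circ\psi(u)\circ M^{-1}$ and since $\det M(q)=\Delta(q)$, the affine isomorphism $M$ restricts to a bijection $\mu:\calQ_1(b)\to\GL_2(\F_2)$. A direct check on a point $g=M(q)$ with $q\in\calQ_1(b)$ shows that $\varphi(u)(g)=M\bigl(\psi(u)[q]\bigr)$, so the permutation $\theta(\varphi(u))$ of $\GL_2(\F_2)$ is exactly $\mu\circ\overline{\psi}(u)\circ\mu^{-1}$. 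Hence $\theta\circ\varphi$ is $\overline{\psi}$ conjugated by $\mu$, and Proposition \ref{premieriso} makes $\theta\circ\varphi$ an isomorphism.

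From this single fact the rest follows formally. Since $\theta\circ\varphi$ is injective, $\varphi$ is injective; since $\theta\circ\varphi$ is surjective, $\theta$ is surjective. It then remains only to prove that $\theta$ is injective, after which $\theta$ is an isomorphism and $\varphi=\theta^{-1}\circ(\theta\circ\varphi)$ is one too, being a composite of isomorphisms. As a sanity check, all three groups then share the order $6!$, consistent with $\Sp_4(\F_2)$ having order $720$ and $\frak{S}(\GL_2(\F_2))\simeq\frak{S}_6$.

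The one step with genuine content is the injectivity of $\theta$, i.e.\ that an affine automorphism of $\Mat_2(\F_2)$ stabilizing $\GL_2(\F_2)$ is determined by its restriction to $\GL_2(\F_2)$. For this I would invoke Lemma \ref{GLaffengendre}: with $n=2>1$, no strict affine subspace of $\Mat_2(\F_2)$ contains $\GL_2(\F_2)$, so the affine span of $\GL_2(\F_2)$ is all of $\Mat_2(\F_2)$; as two affine maps agreeing on an affinely spanning set must coincide, we get $\Ker\theta=\{\id\}$. I expect this spanning argument to be the only real obstacle, the remainder being the bookkeeping that identifies the action of $\varphi(u)$ on $\GL_2(\F_2)$ with the action of $\psi(u)$ on $\calQ_1(b)$ through $\mu$.
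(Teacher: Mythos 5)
Your proof is correct and follows essentially the same route as the paper: the paper also establishes injectivity of the natural map $\calA\calG_2(\F_2)\rightarrow\frak{S}(\GL_2(\F_2))$ via Lemma \ref{GLaffengendre} and then derives both isomorphisms from Proposition \ref{premieriso} through the composite. You have merely spelled out the "follows readily" step (the identification of $\theta\circ\varphi$ with a conjugate of $\overline{\psi}$ via $\mu$) that the paper leaves implicit.
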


\begin{proof}
By Lemma \ref{GLaffengendre}, every affine automorphism of $\Mat_2(\F_2)$
which fixes every non-singular matrix must be the identity, hence the second map is one-to-one.
The conclusion follows then readily from Proposition \ref{premieriso}.
\end{proof}

\noindent We may now easily compute an explicit affine automorphism of $\Mat_2(\F_2)$
which is not linear. The six elements of $\GL_2(\F_2)$ are:
$$\begin{bmatrix}
1 & 0 \\
0 & 1
\end{bmatrix}, \; \begin{bmatrix}
0 & 1 \\
1 & 0
\end{bmatrix}, \quad \begin{bmatrix}
1 & 1 \\
0 & 1
\end{bmatrix}, \quad \begin{bmatrix}
1 & 0 \\
1 & 1
\end{bmatrix}, \quad \begin{bmatrix}
0 & 1 \\
1 & 1
\end{bmatrix} \quad \text{and} \quad \begin{bmatrix}
1 & 1 \\
1 & 0
\end{bmatrix}.$$
Define $u \in \calA\calG_2(\F_2)$ as the element which permutes the first two and fixes all the others.
Notice that the four last matrices form a basis of $\Mat_2(\F_2)$, hence $u$ cannot be linear.
More explicitly, a straightforward computation shows:
$$\forall (a,b,c,d)\in \F_2^4, \quad u\begin{bmatrix}
a & c \\
b & d
\end{bmatrix}=\begin{bmatrix}
b+c+d+1 & a+b+d+1 \\
a+c+d+1 & a+b+c+1
\end{bmatrix}.$$

\subsection{\ldots and back again to $\Sp_4(\F_2)$}

Notice now that the determinant is a regular quadratic form on $\Mat_2(\F_2)$
(in the canonical basis $(E_{1,1},E_{2,2},E_{1,2},E_{2,1})$ of $\Mat_2(\F_2)$, it is represented by the
matrix $\begin{bmatrix}
0 & 1 & 0 & 0 \\
0 & 0 & 0 & 0 \\
0 & 0 & 0 & 1 \\
0 & 0 & 0 & 0
\end{bmatrix}$, with a non-singular alternate part).
Let $B$ denote its polar form, so that $B(X,Y)=\det(X+Y)-\det X-\det Y$ for every $(X,Y)\in \Mat_2(\F_2)^2$.
Let $u \in \calA\calG_2(\F_2)$ and $\vec{u}$ be its linear part.
Then $u$ is a determinant preserver (since $u$ is bijective and stabilizes the finite set $\GL_2(\F_2)$).
For every $M \in \Mat_2(\F_2)$, equating $\det(u(M))$ with $\det M$ yields:
$$\det u(0)+B(u(0),\vec{u}(M))+\det(\vec{u}(M))=\det 0+B(u(0),M)+\det(M).$$
Taking the polar form on both sides then shows that $\vec{u}$ is a symplectic automorphism for $B$.
From that, we deduce a group homomorphism:
$$\alpha : \calA\calG_2(\F_2) \longrightarrow \Sp(B)$$
which we claim is an isomorphism. Since both groups have the same order, it will suffice to show $\alpha$ is one-to-one.
If not, then there would be a non-identity translation which preservers the determinant on $\Mat_2(\F_2)$, hence
a non-zero matrix $A$ such that $\det(A+M)=\det M$ for every $M \in \Mat_2(\F_2)$.
This would yield $\forall M \in \Mat_2(\F_2), \; \det A+B(A,M)=0$, hence $\det A=0$, and then $B(A,-)=0$ with $A \neq 0$. This would
contradict the fact that $B$ is symplectic. We conclude that $\alpha$ is an isomorphism, which was the final statement in Theorem \ref{affpresdSP}.

\section*{Acknowledgements}
This article is dedicated to Saab Abou-Jaoud\'e, who has, some time ago, offered me crucial help in my research
on the Flanders' theorem.

\end{document}